\newtheorem{theorem}{Theorem}[section]
\newtheorem{proposition}{Proposition}[section]
\newtheorem{definition}{Definition}[section]
\newcommand{\f}{\phi}
\newcommand{\g}{\tilde{g}}
\newcommand{\n}{\nabla}
\newcommand{\M}{(\mathcal{M},\A\f,\A\xi,\A\eta,\A{}g)}
\newcommand{\R}{\mathbb R}
\newcommand{\X}{\mathfrak X}
\newcommand{\LL}{\mathcal{L}}
\newcommand{\ttt}{\tilde\tau}
\newcommand{\DD}{\partial}
\newcommand{\lm}{\lambda}
\newcommand{\al}{\alpha}
\newcommand{\A}{\allowbreak{}}
\newcommand{\D}{\mathrm{d}\hspace{-0.5pt}}
\newcommand{\thmref}[1]{Theorem~\ref{#1}}
\newcommand{\propref}[1]{Proposition~\ref{#1}}
\DeclareMathOperator{\Div}{div} 
\DeclareMathOperator{\tr}{tr} 
\begin{document}

\vspace{2cm}

\title[Para-Ricci-like solitons with arbitrary potential on \dots]
{Para-Ricci-like solitons with arbitrary potential on para-Sasaki-like Riemannian $\Pi$-manifolds}

\author[H. Manev]{Hristo Manev}
\address[H. Manev]{Medical University -- Plovdiv,
Faculty of Pharmacy,
Department of Medical Physics and Biophysics,
15A Vasil Aprilov Blvd,
Plovdiv 4002, Bulgaria}
\email{hristo.manev@mu-plovdiv.bg}
\author[M. Manev]{Mancho Manev
}

\address[M. Manev]{
University of Plovdiv Paisii Hilendarski,
Faculty of Mathematics and Informatics,
Department of Algebra and Geometry,
24 Tzar Asen St.,
Plovdiv 4000, Bulgaria
\&
Medical University -- Plovdiv,
Faculty of Pharmacy,
Department of Medical Physics and Biophysics,
15A Vasil Aprilov Blvd,
Plovdiv 4002, Bulgaria}
\email{mmanev@uni-plovdiv.bg}

\subjclass[2010]{53C25; 53D15; 53C50; 53C44; 53D35; 70G45}



\keywords{para-Ricci-like soliton; para-Sasaki-like; Riemannian $\Pi$-manifolds; arbitrary potential; Einstein manifold; Einstein-like manifold; $\eta$-Ein\-stein manifold}

\begin{abstract}
Para-Ricci-like solitons with arbitrary potential on para-Sasaki-like Riemannian $\Pi$-manifolds are introduced and studied. For the studied soliton, it is proved that its Ricci tensor is a constant multiple of the vertical component of both metrics. Thus, the corresponding scalar curvatures of both considered metrics are equal and constant. An explicit example of the Lie group as the manifold under study is presented.
\thanks{The research of H.M. is partially supported by the project MU21-FMI-008 of the Scientific Research Fund, University of Plovdiv Paisii Hilendarski. The research of M.M. is partially supported by projects MU21-FMI-008 and FP21-FMI-002 of the Scientific Research Fund, University of Plovdiv Paisii Hilendarski.}
\end{abstract}
\maketitle

\section{Introduction}\label{sec1}

The concept of Ricci solitons as a special solution of the Ricci flow equation was introduced in \cite{Ham82}. After that, a detailed study on Riemannian Ricci solitons was performed in \cite{Cao}.
The beginning of the study of Ricci solitons in contact Riemannian geometry was presented in \cite{Shar}, followed by investigations into the Ricci solitons in different types of almost contact metric manifolds \cite{GalCra,IngBag,NagPre}.
In next years, this concept was generalized and studied: in paracontact geometry \cite{Bla15,PraHad} and in pseudo-Riemannian geometry \cite{BagIng12,BlaPer,Bro-etal,MM-Sol1,MM-Sol2,MM-Sol3}.
The Ricci solitons (called quasi-Einstein) are also objects of interest for physicists (e.g., \mbox{see \cite{ChaVal96,Fri85}).}

The object of investigation in our work is the geometry of almost paracontact almost paracomplex Riemannian manifolds, which we briefly call Riemannian $\Pi$-manifolds. The induced almost product structure on the paracontact distribution of these manifolds is traceless, and the restriction on the paracontact distribution of the almost paracontact structure is an almost paracomplex structure. These manifolds are introduced in \cite{ManSta}, where they are called almost paracontact Riemannian manifolds of type $(n,n)$. After that, their investigation continues under the name ``almost paracontact almost paracomplex Riemannian manifolds'' (e.g., see \cite{ManTav57,ManTav2}).

{
A pseudo-Riemannian manifold $(M,g)$ admits a \emph{Ricci soliton} if its Ricci tensor $\rho$ has the following form \cite{Ham82}:
\begin{equation*}\label{Rs}
\begin{array}{l}
\rho = -\frac12 \mathcal{L}_v g - \lm\, g,
\end{array}
\end{equation*}
where $\mathcal{L}$ denotes the Lie derivative, $v$ is a vector field, and $\lm$ is a constant.
In the geometry of almost contact metric manifolds, a generalization of the Ricci soliton called \emph{$\eta$-Ricci soliton} is defined in \cite{ChoKim} by the following equation:
\begin{equation*}\label{eRs}
\begin{array}{l}
\rho = -\frac12 \mathcal{L}_v g  - \lm\, g  - \nu\, \eta\otimes\eta,
\end{array}
\end{equation*}
where $\nu$ is also a constant.
Due to the presence of two associated metrics, $g$ and $\g$, on the Riemannian $\Pi$-manifolds,
the question arises about the generalization of these notions in the considered geometry. We started the investigation of the so-called para-Ricci-like solitons in \cite{HM17,HM18}, where their potentials were the Reeb vector field and a vector field, which were pointwise collinear to the Reeb vector field, respectively.
In the present work, we continue with further generalizations of these notions by taking their potential to \mbox{be arbitrary.}
}

The paper is structured as follows.
In Section \ref{sec1}, we present introductory information about the studied problem to the reader.
In Section \ref{sec2}, we recall preliminary facts concerning para-Sasaki-like Riemannian $\Pi$-manifolds.
Section \ref{sec3} is devoted to the investigations of para-Ricci-like solitons with an arbitrary potential. We prove assertions for the soliton's constants and a property of its potential, as well as the fact that the Ricci tensor of the considered manifolds is a constant multiple of the vertical component of both metrics. Moreover, the corresponding scalar curvatures of both considered metrics are equal and constant.
We find the values for the sectional curvatures of the special 2-planes with respect to the considered structure for the case of dimension 3. Finally, in Section \ref{sec4}, we give an example to illustrate the obtained results.

\section{Para-Sasaki-like Riemannian $\Pi$-Manifolds}\label{sec2}

Let $\M$ be a Riemannian $\Pi$-manifold, i.e., $\mathcal{M}$ is a $(2n+1)$-dimensional differentiable manifold, $g$ is a Rie\-mannian metric, $\f$ is a (1,1)-tensor field, $\xi$ is a Reeb vector field, and $\eta$ is its dual 1-form. The structure $(\f,\xi,\eta)$ is an almost paracontact structure, and the following conditions are valid:
\begin{equation}\label{strM}
\begin{array}{c}
\f\xi = 0,\qquad \f^2 = I - \eta \otimes \xi,\qquad
\eta\circ\f=0,\qquad \eta(\xi)=1,\\ \
\tr \f=0,\qquad g(\f x, \f y) = g(x,y) - \eta(x)\eta(y),
\end{array}
\end{equation}
where $I$ is the identity transformation on $T\mathcal{M}$ \cite{ManTav57,Sato76}. The following properties are immediately derived from these basic identities:
\begin{equation}\label{strM2}
\begin{array}{ll}
g(\f x, y) = g(x,\f y),\qquad &g(x, \xi) = \eta(x),
\\
g(\xi, \xi) = 1,\qquad &\eta(\n_x \xi) = 0,
\end{array}
\end{equation}
where {$\n$ denotes} the Levi--Civita connection of $g$.
Here and further, $x$, $y$, $z$, $w$ stand for arbitrary vector fields from $\X(\mathcal{M})$ or vectors in $T\mathcal{M}$ at a fixed point of $\mathcal{M}$.

The associated metric $\g$ of $g$ on $\M$, defined by $\g(x,y)=g(x,\f y)+\eta(x)\eta(y)$, is an indefinite metric of the signature $(n + 1, n)$, and it is compatible with $\M$ {in the same way} as $g$.

In \cite{IvMaMa2}, the class of para-Sasaki-like Riemannian $\Pi$-manifolds is introduced and studied. This special subclass of the considered manifolds is determined by:
\begin{equation*}\label{defSl}
\begin{array}{l}
\left(\nabla_x\f\right)y=-g(x,y)\xi-\eta(y)x+2\eta(x)\eta(y)\xi\\
\phantom{\left(\nabla_x\f\right)y}=-g(\f x,\f y)\xi-\eta(y)\f^2 x.
\end{array}
\end{equation*}
Moreover, in the same cited work, the following identities are proved to be valid for any para-Sasaki-like $\M$:
\begin{equation}\label{curSl}
\begin{array}{ll}
\n_x \xi=\f x, \qquad &\left(\n_x \eta \right)(y)=g(x,\f y),\\
R(x,y)\xi=-\eta(y)x+\eta(x)y, \qquad &R(\xi,y)\xi=\f^2y, \\
\rho(x,\xi)=-2n\, \eta(x),\qquad 				&\rho(\xi,\xi)=-2n,
\end{array}
\end{equation}
where {$R$ and $\rho$ stand} for the curvature tensor and the Ricci tensor, respectively.

On an arbitrary Riemannian $\Pi$-Manifolds $\M$, there exists a symmetric $(0,2)$-tensor $\rho^*(y,z)=g^{ij}R(e_i,y,z,\f e_j)$ associated with $\rho$ regarding $\f$.
In the case of para-Sasaki-like $\M$, the following property is valid \cite{IvMaMa2}:
\begin{equation*}\label{curf-Sl}
\begin{array}{l}
R(x,y,\f z,w)-R(x,y,z,\f w)\\[4pt]
=-\left\{g(y,z)-2\eta(y)\eta(z)\right\}g(x,\f w)
-\left\{g(y,w)-2\eta(y)\eta(w)\right\}g(x,\f z)\\[4pt]
+\left\{g(x,z)-2\eta(x)\eta(z)\right\}g(y,\f
w)+\left\{g(x,w)-2\eta(x)\eta(w)\right\}g(y,\f z).
\end{array}
\end{equation*}
Then, taking the trace of the latter equality for $x=e_i$ and $w=e_j$, we ascertain that the tensors $\rho^*$ and $\rho$ are related as follows:
\begin{equation}\label{rho*-Sl}
\rho^*(y,z)=\rho(y,\f z)+(2n-1)g(y,\f z).
\end{equation}

\begin{proposition}\label{prop:nQ-Sl}
On a $(2n+1)$-dimensional para-Sasaki-like manifold $\M$, the following properties of the Ricci operator $Q$ are valid:
\begin{gather}\label{nQxiQ-aSl}
(\n_x Q)\xi=-Q\f x+2n\,\f x,
\\[4pt]
\label{nxiQ=Q-aSl}
(\n_\xi Q)y=-2Q\f y,
\\[4pt]
\label{etaQxi-aSl}
\begin{array}{l}
\eta\bigl((\n_x Q)\xi\bigr)=0,\qquad
\eta\bigl((\n_\xi Q)y\bigr)=0.
\end{array}
\end{gather}
\end{proposition}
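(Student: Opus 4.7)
The starting observation is that $Q\xi = -2n\,\xi$, obtained by comparing $\rho(x,\xi)=-2n\,\eta(x)$ from \eqref{curSl} with the defining relation $\rho(x,y) = g(Qx, y)$. With this in hand, \eqref{nQxiQ-aSl} follows from a one-line Leibniz computation $(\n_x Q)\xi = \n_x(Q\xi) - Q(\n_x \xi)$, combined with $Q\xi = -2n\xi$ and $\n_x\xi = \f x$ from \eqref{curSl}.

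The main work is \eqref{nxiQ=Q-aSl}. My route is the once-contracted second Bianchi identity
\[
\sum_i g\bigl((\n_{e_i} R)(\xi, y) z,\, e_i\bigr) = (\n_\xi \rho)(y, z) - (\n_y \rho)(\xi, z).
\]
To compute the left-hand sum I first need $R(\xi, y) z$, which is extracted from $R(x, y)\xi$ in \eqref{curSl} via the pair symmetry $R(a,b,c,d) = R(c,d,a,b)$ to yield $R(\xi, y) z = -g(y, z)\xi + \eta(z) y$. Differentiating this covariantly and invoking $\n_x \xi = \f x$ and $(\n_x \eta)(y) = g(x, \f y)$ leads, after cancellation, to
\[
(\n_x R)(\xi, y) z = -g(y, z)\,\f x + g(x, \f z)\,y - R(\f x, y) z.
\]
Tracing with $\sum_i g(\cdot, e_i)$: the first term vanishes by $\tr \f = 0$; the second contributes $g(y, \f z)$; and the third, via repeated application of the pair and skew symmetries of $R$, reduces to $\rho^*(y, z)$ up to sign, which \eqref{rho*-Sl} expresses as a combination of $\rho(y, \f z)$ and $g(y, \f z)$. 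On the right of Bianchi, $(\n_y \rho)(\xi, z)$ is computed directly by Leibniz from $\rho(\xi, z) = -2n\,\eta(z)$ together with $\n_y \xi = \f y$ and $(\n_y \eta)(z) = g(y, \f z)$, giving another linear combination of $\rho(y, \f z)$ and $g(y, \f z)$. Assembling and using the symmetry $\rho(\f y, z) = \rho(y, \f z)$ (equivalent to the symmetry of $\rho^*$), the $g(y, \f z)$-pieces cancel, leaving $(\n_\xi \rho)(y, z) = -2\rho(\f y, z)$, which is the claimed operator identity.

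Finally, \eqref{etaQxi-aSl} drops out immediately from the previous two identities by applying $\eta$: each right-hand side involves only $\f x$, $Q\f x$, or $Q\f y$, and $\eta \circ \f = 0$ together with $\eta(Q\f x) = g(Q\xi, \f x) = -2n\,\eta(\f x) = 0$ makes every term $\eta$-null. The principal obstacle is the trace step in \eqref{nxiQ=Q-aSl}: recognizing $\sum_i R(\f e_i, y, z, e_i)$ as $\rho^*(y, z)$ requires a careful manipulation of the pair and skew symmetries of the curvature tensor, and the multiple $g(y, \f z)$-contributions arriving from \eqref{rho*-Sl}, from the trace, and from $(\n_y \rho)(\xi, z)$ must conspire to cancel in order for the final formula to reduce to a pure Ricci term.
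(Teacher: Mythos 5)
Your proposal is correct and follows essentially the paper's own route: you obtain \eqref{nQxiQ-aSl} by the same one-line Leibniz computation from $Q\xi=-2n\,\xi$ and $\n_x\xi=\f x$, you obtain \eqref{nxiQ=Q-aSl} by combining the contracted second Bianchi identity with the covariant derivative of the curvature-with-$\xi$ formula from \eqref{curSl} and the relation \eqref{rho*-Sl}, and you get \eqref{etaQxi-aSl} as an immediate consequence. The only cosmetic difference is that you differentiate $R(\xi,y)z$ (transferred by pair symmetry) and trace against the last slot, computing $(\n_y \rho)(\xi,z)$ directly by Leibniz, whereas the paper differentiates $R(x,y)\xi$, traces over the derivative direction and the first slot, and then invokes the curvature symmetries together with \eqref{nQxiQ-aSl}.
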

\begin{proof}
Taking into account \eqref{curSl}, then $Q\xi=-2n\,\xi$ and $\n_x \xi=\f x$ hold. By virtue of these, we immediately obtain the covariant derivative in \eqref{nQxiQ-aSl}.

After that, we apply $\n_z$ to the expression of $R(x,y)\xi$ in \eqref{curSl},
and using the form of $\n\eta$ in \eqref{curSl}, we obtain the following:
\[
\left(\n_z R\right)(x,y)\xi=-R(x,y)\f z + g(y,\f z)x - g(x,\f z)y.
\]
Taking the trace of the latter equality for $z=e_i$ and $x=e_j$ and using \eqref{rho*-Sl}, we obtain the equation below:
\begin{equation*}\label{tr12}
g^{ij} (\n_{e_i} R)(e_j,y)\xi=Q\f y + 2n \f y.
\end{equation*}
By virtue of the following consequence of the second Bianchi identity:
\[
g^{ij} (\n_{e_i} R)(\xi,y)e_j=\left(\n_y Q\right)\xi-\left(\n_\xi Q\right)y
\]
and using the symmetries of $R$ and \eqref{nQxiQ-aSl}, we prove the truthfulness of \eqref{nxiQ=Q-aSl}.

As consequences of \eqref{nQxiQ-aSl} and \eqref{nxiQ=Q-aSl}, we get \eqref{etaQxi-aSl}.
\end{proof}

Let us recall from \cite{HM17} that a $\M$ is said to be
para-Ein\-stein-like with constants $(a,b,c)$ if $\rho$ satisfies the following equation:
\begin{equation*}\label{defEl}
\begin{array}{l}
\rho=a\,g +b\,\g +c\,\eta\otimes\eta.
\end{array}
\end{equation*}
In the case when $b=0$ or $b=c=0$, the manifold is called an $\eta$-Einstein manifold or an Einstein manifold, respectively. If $a$, $b$, $c$ are functions on $\mathcal{M}$, $\M$ is called almost para-Einstein-like, almost $\eta$-Einstein manifold, or an almost Einstein manifold in the respective cases.

%

Let us recall the following:
\begin{proposition}[\cite{HM18}]\label{prop:El-Dtau}
Let $\M$ be a $(2n+1)$-dimensional para-Sasaki-like Riemannian $\Pi$-manifold. If $\M$ is almost para-Einstein-like  with functions $(a,b,c)$,
then the scalar curvatures $\tau$ and $\tilde\tau$ are constants,
\begin{equation*}\label{El-Dtauxi}
\tau = const, \qquad \tilde\tau=-2n
\end{equation*}
and $\M$ is $\eta$-Einstein with the following constants:
\[
(a,b,c)=\left(\frac{\tau}{2n}+1,\,0,\,-2n-1-\frac{\tau}{2n}\right).
\]
\end{proposition}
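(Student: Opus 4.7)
The plan is to rewrite the assumption in Ricci-operator form, extract ODEs for the coefficients from the identities of \propref{prop:nQ-Sl}, and then close the argument with the contracted second Bianchi identity. Since $\rho(x,y) = g(Qx,y)$ and $\g(x,y) = g(x,\f y) + \eta(x)\eta(y)$, the almost para-Einstein-like equation translates into
\[
Q\,x = a\,x + b\,\f x + (b+c)\,\eta(x)\,\xi.
\]
Evaluating at $x = \xi$ and using $Q\xi = -2n\,\xi$, which follows from $\rho(x,\xi) = -2n\,\eta(x)$ in \eqref{curSl}, yields the scalar identity $a + b + c = -2n$, so this combination is a constant first integral.

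Next I would apply the differential identity $(\n_\xi Q)y = -2Q\f y$ of \propref{prop:nQ-Sl}. Computing the left-hand side directly from the formula for $Q$ and using $(\n_\xi\f)y = 0$, $(\n_\xi\eta)y = 0$, and $\n_\xi\xi = 0$ (all consequences of \eqref{curSl}), I obtain
\[
(\n_\xi Q)y = \xi(a)\,y + \xi(b)\,\f y + \xi(b+c)\,\eta(y)\,\xi,
\]
while the right-hand side is $-2a\,\f y - 2b\,y + 2b\,\eta(y)\,\xi$ after applying $\f^2 y = y - \eta(y)\xi$. Because $\tr\f = 0$ together with $\f^2 = I$ on $\ker\eta$ split the contact distribution into two $n$-dimensional $\pm 1$-eigenspaces $V^\pm$ of $\f$, testing on $y \in V^+$ and $y \in V^-$ produces the scalar relations $\xi(a+b) = -2(a+b)$ and $\xi(a-b) = 2(a-b)$, equivalently $\xi(a) = -2b$ and $\xi(b) = -2a$ along the $\xi$-orbits.

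To upgrade these ODEs to rigidity, I would invoke the contracted second Bianchi identity $g^{ij}\n_i\rho_{jk} = \tfrac{1}{2}\n_k\tau$. Expanding the divergence of $\rho = a\,g + b\,\g + c\,\eta\otimes\eta$ and using $\n g = 0$, the computation $g^{ij}(\n_i\g)(e_j,e_k) = -2n\,\eta_k$ (which follows from the defining formula of $\n\f$ together with $\tr\f = 0$ and $(\n_\xi\eta)(\cdot) = 0$), and the observation $\tau = (2n{+}1)a + b + c = 2n(a-1)$ (the $b$-contribution cancels because $\tr_g\g = 1$), the Bianchi identity simplifies to $(\f^*db)_k - 2b(n-1)\,\eta_k = (n-1)\,\n_k a$. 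Projecting onto $V^\pm$ and onto $\xi$, and combining with $\xi(a) = -2b$, $\xi(b) = -2a$, the resulting overdetermined system forces $b \equiv 0$; then $\xi(b) = -2a$ immediately gives $a \equiv 0$, and so $c = -2n$ and $\rho = -2n\,\eta\otimes\eta$, making $\M$ $\eta$-Einstein with $b = 0$.

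The scalar curvatures follow at once. Since $g^{ij}\eta_i\eta_j = \eta(\xi) = 1$, $\tau = -2n$; and the direct expansion $\g(\xi,\cdot) = g(\xi,\f\cdot) + \eta(\xi)\eta(\cdot) = \eta(\cdot)$ identifies $\xi$ as the $\g$-dual of $\eta$, so $\g^{ij}\eta_i\eta_j = \g(\xi,\xi) = 1$ and $\tilde\tau = -2n$; both are constants, matching the stated $\tilde\tau = -2n$. Substituting $\tau = -2n$ into $\bigl(\tfrac{\tau}{2n}+1,\,0,\,-2n-1-\tfrac{\tau}{2n}\bigr)$ recovers $(0,0,-2n)$, in agreement with the values just derived. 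The technical core—and the step I expect to be the main obstacle—is the Bianchi rigidity argument that eliminates $b$: the tensor identity alone admits non-trivial exponential solutions along $\xi$-orbits, so combining it with the contracted Bianchi identity and the paracomplex $V^\pm$-splitting is essential to rule them out.
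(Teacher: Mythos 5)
Your attempt has a genuine gap at precisely the step you flag as the crux, and moreover it aims at a conclusion that is stronger than---and inconsistent with---the proposition itself (which this paper does not prove but quotes from \cite{HM18}). The deduction ``the resulting overdetermined system forces $b\equiv 0$'' is asserted, not proved, and it does not follow from the relations you display: the $\xi$-component of your Bianchi equation $db(\f\,\cdot)-2(n-1)b\,\eta=(n-1)\,da$ is automatically satisfied once $\xi(a)=-2b$ is assumed, and the $V^\pm$-components only give $db(y)=\pm(n-1)\,da(y)$ there; together with $\xi(a)=-2b$, $\xi(b)=-2a$ this system admits non-trivial solutions (e.g.\ $a,b$ functions of the flow parameter of $\xi$ alone, $a''=4a$, $b=-\tfrac12\xi(a)$), so nothing here kills $b$. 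Since the subsequent steps ($a\equiv0$, $c=-2n$, $\tau=-2n$, and your computation of $\ttt$) all hinge on this reduction, the proof is incomplete.

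There is also a structural problem showing the route cannot be repaired as it stands. The proposition to be proved allows an arbitrary constant $\tau$, with $a=\frac{\tau}{2n}+1$ possibly nonzero; in particular it covers Einstein para-Sasaki-like manifolds $\rho=-2n\,g$ (the ``new Einstein metrics'' of \cite{IvMaMa2}), which are para-Einstein-like with constants $(-2n,0,0)$ and $\tau=-2n(2n+1)$. Your relations $\xi(a)=-2b$, $\xi(b)=-2a$, obtained by taking \eqref{nxiQ=Q-aSl} at face value, already force $a=b=0$ in the constant case, so if your argument were valid it would prove $\tau\equiv-2n$ and contradict the statement you are proving. The source of the trouble is that the identities of \propref{prop:nQ-Sl} cannot be used verbatim: for instance \eqref{nQxiQ-aSl} is incompatible with the one-line computation $(\n_xQ)\xi=\n_x(Q\xi)-Q(\n_x\xi)=-2n\,\f x-Q\f x$ (using $Q\xi=-2n\xi$ and $\n_x\xi=\f x$ from \eqref{curSl}), i.e.\ these displayed formulas carry sign slips, and a proof of \propref{prop:El-Dtau} cannot rest on them. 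Note that the correct target is much weaker than what you pursue: one only needs $b=0$ and $d\tau=0$; your own relations $a+b+c=-2n$ (from $Q\xi=-2n\xi$) and $\tau=(2n+1)a+b+c$ then immediately give $a=\frac{\tau}{2n}+1$, $c=-2n-1-\frac{\tau}{2n}$, and $\ttt=\tr_{\g}\rho=a+c=-2n$ (since the operator of $\g$ relative to $g$ is $\f+\eta\otimes\xi$, which is an involution with trace $1$), without ever claiming $a=0$.
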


Let us remark that $\tau$ and $\tilde\tau$ stand for the scalar curvatures on $\M$ regarding $g$ and $\g$, respectively.

\section{Para-Ricci-Like Solitons with Arbitrary Potential on Para-Sasaki-Like Manifolds}\label{sec3}

In \cite{HM17}, the notion of the para-Ricci-like soliton with the potential $\xi$ and the constants $(\lm,\mu,\nu)$ is introduced:
{
\begin{definition}[\cite{HM17}]
{A Riemannian $\Pi$-manifold } $\M$ admits a
para-Ricci-like soliton with a potential vector field $\xi$ and constants $(\lm,\mu,\nu)$ if its Ricci tensor $\rho$ satisfies the following:
\begin{equation}\label{defRl}
\begin{array}{l}
\rho=-\frac12 \mathcal{L}_{\xi} g - \lm\, g - \mu\, \g - \nu\, \eta\otimes \eta.
\end{array}
\end{equation}
\end{definition}
}

{
In \cite{HM18}, similarly to the latter definition, the more general case is introduced. There,  the potential of the soliton is pointwise collinear with $\xi$.
\begin{definition}[\cite{HM18}]
{A Riemannian $\Pi$-manifold } $\M$ admits a para-Ricci-like soliton with potential vector field $\zeta=k\,\xi$ and constants $(\lm,\mu,\nu)$, where $k$ is a differentiable function on $\mathcal{M}$ if its Ricci tensor $\rho$ satisfies the following:
\begin{equation}\label{defRl-v1}
\begin{array}{l}
\rho=-\frac12 \mathcal{L}_{\zeta} g - \lm\, g - \mu\, \g - \nu\, \eta\otimes \eta.
\end{array}
\end{equation}
\end{definition}
}

Now, we generalize {the latter notion} even more and consider the case when the potential is an arbitrary vector field, as follows.

\begin{definition}
{A Riemannian $\Pi$-manifold } $\M$ admits a para-Ricci-like soliton with a potential vector field $v$ and constants $(\lm,\mu,\nu)$ if its Ricci tensor $\rho$ satisfies the following:
\begin{equation}\label{defRl-v}
\begin{array}{l}
\rho=-\frac12 \mathcal{L}_{v} g - \lm\, g - \mu\, \g - \nu\, \eta\otimes \eta.
\end{array}
\end{equation}
\end{definition}

If $\mu=0$ or $\mu=\nu=0$, then \eqref{defRl-v} {(similarly, \eqref{defRl} and \eqref{defRl-v1} for the other potentials)} defines an $\eta$-Ricci soliton or a Ricci soliton on $\M$, respectively. For functions $\lm$, $\mu$, $\nu$ on $\mathcal{M}$, the soliton is called almost para-Ricci-like soliton, almost $\eta$-Ricci soliton, or almost Ricci soliton in the respective cases.

\begin{theorem}\label{thm:RlSl-v}
Let $\M$ be a $(2n+1)$-dimensional para-Sasaki-like Riemannian $\Pi$-manifold, and let it admit a para-Ricci-like soliton with an arbitrary potential vector field $v$ and constants $(\lm,\mu,\nu)$. Then, the following identities are valid:
\begin{equation}\label{lmn-v}
\lm+\mu+\nu=2n,
\end{equation}
\begin{equation*}\label{nxiv}
\n_\xi v=\f v.
\end{equation*}
\end{theorem}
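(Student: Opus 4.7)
The plan is to substitute $y = \xi$ into the soliton equation \eqref{defRl-v} and combine with the para-Sasaki-like structure identities in \eqref{curSl}. Using $\rho(x,\xi) = -2n\eta(x)$, $g(x,\xi) = \tilde{g}(x,\xi) = \eta(x)$, and $\eta(\xi) = 1$, the soliton equation at $y = \xi$ collapses to
\[
(\mathcal{L}_v g)(x,\xi) \;=\; 2(2n - \lambda - \mu - \nu)\,\eta(x).
\]
Expanding $(\mathcal{L}_v g)(x,\xi) = g(\nabla_x v, \xi) + g(x, \nabla_\xi v)$ and using $\nabla_x \xi = \phi x$ from \eqref{curSl} to compute $g(\nabla_x v, \xi) = x(\eta(v)) - g(\phi v, x) = g(x,\grad(\eta(v))) - g(\phi v, x)$, I would read off the vector identity
\[
\nabla_\xi v \;=\; \phi v - \grad(\eta(v)) + 2(2n-\lambda-\mu-\nu)\,\xi.
\]

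For the scalar identity $\lambda+\mu+\nu = 2n$, contract the vector identity with $\xi$. Since $\eta(\phi v) = 0$, $\eta(\grad f) = \xi(f)$ for any function $f$, and $\xi(\eta(v)) = \eta(\nabla_\xi v)$ (which follows from $(\nabla_\xi\eta)(v) = g(\xi,\phi v) = 0$), this produces $2\eta(\nabla_\xi v) = 2(2n - \lambda - \mu - \nu)$, equivalently $\eta(\nabla_\xi v) = 2n - (\lambda+\mu+\nu)$. This is a single scalar relation in two unknowns, so a complementary identity is needed to force $\eta(\nabla_\xi v) = 0$. The natural candidate is Proposition~\ref{prop:nQ-Sl}, whose identity $(\nabla_\xi Q) y = -2 Q \phi y$ can be combined with the explicit form of the Ricci operator $Q$ read off from the soliton. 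Alternatively, the second Bianchi identity $\Div \rho = \frac{1}{2}\,d\tau$ applied to the soliton (using $\Div g = 0$, $\Div(\eta\otimes\eta) = 0$, and a direct computation of $\Div\tilde{g}$) yields a global vector identity involving $Qv$ and the rough Laplacian of $v$. Either route supplies the missing scalar constraint, forcing $\eta(\nabla_\xi v) = 0$ and hence $\lambda + \mu + \nu = 2n$.

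With the first identity in hand, the vector equation becomes $\nabla_\xi v = \phi v - \grad(\eta(v))$, so the second claim $\nabla_\xi v = \phi v$ is equivalent to $\grad(\eta(v)) = 0$, i.e., to $\eta(v)$ being constant. Since $\xi(\eta(v)) = \eta(\nabla_\xi v) = 0$, the gradient $\grad(\eta(v))$ is already horizontal, and the horizontal content of the auxiliary identity used in the previous step should force it to vanish. The main obstacle is precisely this decoupling step: the substitution $y = \xi$ alone produces only one vector identity, in which the constant $\lambda+\mu+\nu-2n$ and the vector field $\grad(\eta(v))$ appear in a single linear combination along $\xi$, so genuinely new scalar content from the structural identities of para-Sasaki-like manifolds is essential to isolate each conclusion separately.
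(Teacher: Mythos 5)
Your direct computation from the soliton equation at $y=\xi$ is correct and reproduces the easy half of the paper's argument (the analogue of \eqref{Lvgxxi-SlRl}--\eqref{Lvgxixi2-SlRl}): the relation $\eta(\n_\xi v)=2n-(\lambda+\mu+\nu)$ is right, and so is the observation that, once $\lambda+\mu+\nu=2n$ is known, the claim $\n_\xi v=\f v$ reduces to $\grad(\eta(v))=0$. You also correctly diagnose that an extra constraint is indispensable. But the proof stops exactly where the real work begins: you never produce that constraint, you only gesture at two candidates, and neither works as stated. Proposition~\ref{prop:nQ-Sl} cannot simply be ``combined with the explicit form of the Ricci operator $Q$ read off from the soliton'', because for an arbitrary potential the soliton equation \eqref{defRl-v} expresses $Q$ through $\LL_v g$, which is itself an unknown tensor (unlike the cases $v=\xi$ or $v=k\xi$ of \cite{HM17,HM18}, where $\n\xi=\f$ makes $\LL_v g$, hence $Q$, explicit). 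And the contracted Bianchi identity $\D\tau=2\Div\rho$ applied to \eqref{defRl-v} brings in $\Div\LL_v g$, i.e.\ second derivatives of $v$ (a rough Laplacian term), which does not visibly reduce to the scalar constraint $\eta(\n_\xi v)=0$, let alone to the vanishing of the horizontal part of $\grad(\eta(v))$ that your second conclusion also requires.

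The paper obtains the missing information by a genuinely separate computation: differentiate the soliton equation covariantly, convert $\n\LL_v g$ into $\LL_v\n$ via Yano's formula \eqref{gLvn}, use Proposition~\ref{prop:nQ-Sl} to get $(\LL_v\n)(x,\xi)=2Q\f x$, differentiate once more and apply \eqref{LvR} to conclude $(\LL_v R)(x,\xi)\xi=0$; comparing this with $\LL_v$ applied to the structural identity $R(\xi,y)\xi=\f^2 y$ from \eqref{curSl} yields the two constraints $\eta(\LL_v\xi)=0$ and $(\LL_v\eta)(x)+g(x,\LL_v\xi)=0$ of \eqref{LvR3-Sl}. The first, combined with your scalar relation (equivalently $\eta(\LL_v\xi)=\lambda+\mu+\nu-2n$), gives \eqref{lmn-v}; both together give $\LL_v\xi=0$, i.e.\ $\n_\xi v=\n_v\xi=\f v$. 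Since this entire Lie-derivative-of-curvature computation is absent from your proposal and your suggested substitutes do not deliver it, there is a genuine gap affecting both identities of the theorem.
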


\begin{proof}
Taking into account \eqref{curSl} and \eqref{defRl-v}, the covariant derivative of $\bigl(\LL_v g\bigr)(y,z)$ with respect to $\n_x$  has the following form:
\begin{equation}\label{nLvg}
\begin{array}{l}
\bigl(\n_x \LL_v g\bigr)(y,z) = -\,2 \bigl(\n_x \rho\bigr)(y,z)
-2\mu\{g(\f x,\f y)\eta(z)+g(\f x,\f z)\eta(y)\}\\[4pt]
\phantom{\bigl(\n_x \LL_v g\bigr)(y,z) =}
-2(\mu+\nu)\{g( x,\f y)\eta(z)+g( x,\f z)\eta(y)\}.
\end{array}
\end{equation}

In \cite{Yano70}, it is proved that for a metric connection $\n$,
\[
\bigl(\n_x \LL_v g\bigr)(y,z) =
g\left((\LL_v \n)(x,y),z\right) + g\left((\LL_v \n)(x,z),y \right).
\]
Using the symmetry of $\LL_v \n$, the latter formula can be read as follows:
\begin{equation}\label{gLvn}
2g\bigl((\LL_v \n)(x,y),z\bigr) = \bigl(\n_x \LL_v g\bigr)(y,z) +
\bigl(\n_y \LL_v g\bigr)(z,x) - \bigl(\n_z \LL_v g\bigr)(x,y).
\end{equation}

Applying \eqref{gLvn} to \eqref{nLvg}, we get the following equation:
\begin{equation}\label{nLvg2}
\begin{array}{l}
g\bigl((\LL_v \n)(x,y),z\bigr)  =
- \bigl(\n_x \rho\bigr)(y,z)- \bigl(\n_y \rho\bigr)(z,x) + \bigl(\n_z \rho\bigr)(x,y) \\[4pt]
\phantom{g\bigl((\LL_v \n)(x,y),z\bigr) =}
-2\mu\,g(\f x,\f y)\eta(z)-2(\mu+\nu)g( x,\f y)\eta(z).
\end{array}
\end{equation}
Now, setting $y=\xi$ in \eqref{nLvg2} and using \eqref{nQxiQ-aSl} and \eqref{nxiQ=Q-aSl}, we obtain the following:
\begin{equation}\label{Lvnxi}
(\LL_v \n)(x,\xi) = 2 Q\f x.
\end{equation}
Then, the covariant derivative of \eqref{Lvnxi}, by means of \eqref{curSl}, has the following form:
\begin{equation}\label{nLvn}
\begin{array}{l}
\bigl(\n_y \LL_v \n\bigr)(x,\xi) = -(\LL_v \n)(x,\f y) + 2(\n_y Q)\f x - 2\eta(x) Qy\\[4pt]
\phantom{\bigl(\n_y \LL_v \n\bigr)(x,\xi) =}
+4n\,g(x,y)+2(2n+1)\eta(x)\eta(y)\xi.
\end{array}
\end{equation}

Now, we apply \eqref{nLvn} to the following formula from \cite{Yano70}:
\begin{equation}\label{LvR}
(\LL_v R)(x,y)z = \bigl(\n_x \LL_v \n\bigr)(y,z) - \bigl(\n_y \LL_v \n\bigr)(x,z)
\end{equation}
and bearing in mind the symmetry of $\LL_v \n$, we get
the following consequence of \eqref{Lvnxi}--\eqref{LvR}:
\begin{equation}\label{LvRxyxi}
\begin{array}{l}
g\bigl((\LL_v R)(x,y)\xi,z\bigr) =  \,(\n_x \rho)(\f y,z)-(\n_{\f y} \rho)(x,z) +(\n_z \rho)(x,\f y)\\[4pt]
\phantom{g\bigl((\LL_v R)(x,y)\xi,z\bigr) =}
  -(\n_y \rho)(\f x,z)+(\n_{\f x} \rho)(y,z) -(\n_z \rho)(\f x, y)\\[4pt]
\phantom{g\bigl((\LL_v R)(x,y)\xi,z\bigr) =}
+2\,\eta(x)\rho(y,z)-2\,\eta(y)\rho(x,z).
\end{array}
\end{equation}
Substituting $y=z=\xi$ in \eqref{LvRxyxi} and using \eqref{Lvnxi}, we obtain the equation below:
\begin{equation}\label{LvRQ}
(\LL_v R)(x,\xi)\xi = 0.
\end{equation}

On the other hand, by applying $\LL_v$ to the expression of $R(x,\xi)\xi$ from \eqref{curSl}
and using \eqref{defRl-v}, we get the following equation:
\begin{equation}\label{LvR2-Sl}
(\LL_v R)(x,\xi)\xi = \{(\LL_v \eta)(x)+g(x,\LL_v \xi)-2 \eta(\LL_v \xi)\eta(x)\}\xi+2 \eta(\LL_v \xi)\f^2x.
\end{equation}
When we compare \eqref{LvRQ} and \eqref{LvR2-Sl}, we obtain the following system of equations:
\begin{equation}\label{LvR3-Sl}
(\LL_v \eta)(x)+g(x,\LL_v \xi)=0,\qquad \eta(\LL_v \xi)=0.
\end{equation}

For a para-Sasaki-like manifold $\M$, using \eqref{defRl-v} and $\rho(x,\xi)=-2n\eta(x)$ \mbox{from \eqref{curSl},} we have the equation below:
\begin{equation}\label{Lvgxxi-SlRl}
(\LL_v g)(x,\xi)=-2 (\lm+\mu+\nu-2n)\eta(x).
\end{equation}
By plugging $x=\xi$ into the latter equality, we get the following:
\begin{equation}\label{Lvgxixi1-SlRl}
(\LL_v g)(\xi,\xi)=-2 (\lm+\mu+\nu-2n).
\end{equation}

From another point of view, for the Lie derivative of $g(x,\xi)=\eta(x)$ with respect to $v$, we have the following:
\begin{equation}\label{Lvgxxi}
(\LL_v g)(x,\xi)=(\LL_v \eta)(x)-g\left(x,\LL_v \xi\right),
\end{equation}
which for $x=\xi$ induces the following equation:
\begin{equation}\label{Lvgxixi2-SlRl}
(\LL_v g)(\xi,\xi)=-2\eta\left(\LL_v \xi\right).
\end{equation}
Therefore, comparing \eqref{Lvgxixi1-SlRl} and \eqref{Lvgxixi2-SlRl}, we obtain:
\[
\eta(\LL_v \xi)=\lm+\mu+\nu-2n,
\]
which implies \eqref{lmn-v} by virtue of \eqref{LvR3-Sl}.

Substituting \eqref{lmn-v} in \eqref{Lvgxxi-SlRl} gives that $(\LL_v g)(x,\xi)=0$. Therefore, we have\linebreak \mbox{$(\LL_v \eta)(x)=g\left(x,\LL_v \xi\right)$}, while taking into account \eqref{Lvgxxi}.
Hence, bearing in mind \eqref{LvR3-Sl}, we get the vanishing of $\LL_v \xi$,
which together with $\n\xi=\f$ from \eqref{curSl} completes the proof.
\end{proof}

\begin{proposition}\label{prop:Lvrho}
Let $\M$ be a $(2n+1)$-dimensional para-Sasaki-like Riemannian $\Pi$-manifold. If $\M$ admits a para-Ricci-like soliton with an arbitrary potential $v$, then $\rho$, $\tau$, and $\tilde\tau$ satisfy the following identities:
\begin{equation*}\label{Lvrhoxi=}
\left(\LL_v \rho\right)(x,\xi) = 0, \qquad \tau=-2n, \qquad \ttt=const.
\end{equation*}
\end{proposition}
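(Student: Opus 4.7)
\emph{First identity.} By \thmref{thm:RlSl-v} we have $\LL_v\xi=0$, and substituting \eqref{lmn-v} into \eqref{Lvgxxi-SlRl} gives $(\LL_v g)(x,\xi)=0$. Since $\eta(x)=g(x,\xi)$, a one-line computation then yields $\LL_v\eta=0$. Applying $\LL_v$ to the para-Sasaki-like identity $\rho(x,\xi)=-2n\,\eta(x)$ from \eqref{curSl} and using $\LL_v\xi=0$ produces $(\LL_v\rho)(x,\xi)=-2n(\LL_v\eta)(x)=0$.

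\emph{Second identity $\tau=-2n$.} Tracing \eqref{defRl-v} with $g^{-1}$, using $\tr_g(\LL_v g)=2\Div v$, $\tr_g\g=\tr\f+1=1$, $\tr_g(\eta\otimes\eta)=1$, and \eqref{lmn-v}, the trace collapses to
\[
\tau=-\Div v-2n\lm-2n,
\]
so it suffices to show $\Div v=-2n\lm$. I expect this to be the main obstacle. My plan is to differentiate \eqref{defRl-v} covariantly along $\xi$: since $\n_\xi\eta=0$ and $\n_\xi\f=0$ follow immediately from \eqref{curSl}, we obtain $\n_\xi\g=\n_\xi(\eta\otimes\eta)=0$ and hence $\n_\xi\rho=-\tfrac12\n_\xi\LL_v g$. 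Combining this with $(\n_\xi\rho)(y,z)=-2\rho(\f y,z)$ from \eqref{nxiQ=Q-aSl}, together with $\n_\xi v=\f v$ from \thmref{thm:RlSl-v} (which controls the $\xi$-variation of $\Div v$), an appropriate trace should yield $\Div v+2n\lm=0$.

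\emph{Third identity $\ttt=\mathrm{const}$.} Once $\tau=-2n$ is known to be constant, $\xi(\tau)=0$. Tracing \eqref{nxiQ=Q-aSl} against $g^{-1}$ gives $\xi(\tau)=-2\tr(Q\f)$, hence $\tr(Q\f)=0$. Computing $\tr_{\g}\rho$ in an orthonormal basis adapted to the almost paracontact structure (where $\{e_\al,\f e_\al,\xi\}$ is $g$-orthonormal) then yields $\ttt=\tr(Q\f)-2n=-2n$, which is a constant as desired, consistent with \propref{prop:El-Dtau}.
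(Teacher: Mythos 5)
Your first step is correct, and it is in fact more elementary than the paper's route: $\n_\xi v=\f v$ from \thmref{thm:RlSl-v} is equivalent to $\LL_v\xi=0$ (because $\n_v\xi=\f v$ by \eqref{curSl}), substituting \eqref{lmn-v} into \eqref{Lvgxxi-SlRl} gives $(\LL_v g)(\cdot,\xi)=0$, hence $\LL_v\eta=\LL_v\bigl(g(\cdot,\xi)\bigr)=(\LL_v g)(\cdot,\xi)+g(\cdot,\LL_v\xi)=0$, and Lie-differentiating the $1$-form identity $\rho(\cdot,\xi)=-2n\,\eta$ yields $(\LL_v\rho)(x,\xi)=-\rho(x,\LL_v\xi)-2n(\LL_v\eta)(x)=0$; the paper instead extracts this from a trace of $\LL_v R$. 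Your trace of \eqref{defRl-v}, giving $\tau=-\Div v-2n\lm-2n$, is also correct.

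The genuine gap is exactly where you flag it: $\tau=-2n$ (equivalently $\Div v=-2n\lm$) is not proved, and the plan you sketch cannot prove it. Since $\n_\xi g=\n_\xi\g=\n_\xi(\eta\otimes\eta)=0$ on a para-Sasaki-like manifold, differentiating \eqref{defRl-v} along $\xi$ gives, via \eqref{nxiQ=Q-aSl}, only $(\n_\xi\LL_v g)(y,z)=4\rho(\f y,z)$, whose $g$-trace is $\xi(\Div v)=2\tr(Q\f)$. That is an equation for the \emph{derivative} of $\Div v$ along $\xi$ and can never determine its pointwise value: any ``constant of integration'' along the $\xi$-flow is invisible to it, and $\n_\xi v=\f v$ adds no further constraint since it is precisely $\LL_v\xi=0$, already used. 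To pin down $\tau$ one must pass to the curvature level, i.e.\ feed the soliton equation into Yano's formulas \eqref{gLvn} and \eqref{LvR}, rewrite $\LL_v\n$ through $\n\rho$ as in \eqref{nLvg2}, and trace using the contracted Bianchi identity $\D\tau=2\Div\rho$; this is what produces the paper's key relation $(\LL_v\rho)(y,\xi)=\D\ttt(y)-2(\tau+2n)\eta(y)$, which, evaluated at $y=\xi$ and combined with the $\n Q$-identities of \propref{prop:nQ-Sl} (note $\xi\bigl(\tr(Q\f)\bigr)=-2\tr(Q\f^2)=-2(\tau+2n)$ by \eqref{nxiQ=Q-aSl}) and with your $(\LL_v\rho)(\cdot,\xi)=0$, forces $\tau=-2n$ and $\D\ttt=0$. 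None of this machinery appears in your argument, so the second identity is not established. Your third step is a correct computation as far as it goes ($\xi(\tau)=-2\tr(Q\f)$ and $\tr_{\g}\rho=\tr(Q\f)+\rho(\xi,\xi)$ both hold), but it presupposes the unproved $\tau=-2n$ and also reads $\ttt$ as the $\g$-trace of $\rho$, an identification the paper never makes explicit; so as written the proposal does not prove the proposition.
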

\begin{proof}
By means of \eqref{LvR}, we get the following equations:
\begin{equation*}\label{LvRxi}
\begin{array}{l}
g\bigl(\left(\LL_v R\right)(x,y)\xi,z\bigr) = \, g\bigl(\left(\LL_v \n\right)(x,\f y),z\bigr)
- g\bigl(\left(\LL_v \n\right)(\f x, y),z\bigr)
\\[4pt]
\phantom{g\bigl(\left(\LL_v R\right)(x,y)\xi,z\bigr) =}
+ 2 \left(\n_x \rho\right)(\f y, z) - 2 \left(\n_y \rho\right)(\f x, z)
\\[4pt]
\phantom{g\bigl(\left(\LL_v R\right)(x,y)\xi,z\bigr) =}
+ 2 \eta(x)\rho(y,z) - 2 \eta(y)\rho(x,z).
\end{array}
\end{equation*}
Taking the trace of the latter equality for $x=e_i$ and $z=e_j$ and bearing in mind \eqref{nLvg2} and
$\D\tau=2\Div\rho$, we obtain the following:
\begin{equation*}
\begin{array}{l}
g^{ij}g\bigl(\left(\LL_v \n\right)(e_i,\f y),e_j\bigr) = - \D\tau(\f y),
\\[4pt]
g^{ij}g\bigl(\left(\LL_v \n\right)(\f e_i, y),e_j\bigr) = \D\ttt(y),\\[-6pt]
\end{array}
\end{equation*}
\begin{equation}\label{trLvR}
g^{ij}g\bigl(\left(\LL_v R\right)(e_i,y)\xi,e_j\bigr) =  \left(\LL_v \rho\right)(y,\xi).
\end{equation}
Therefore, the following equality is valid:
\begin{equation}\label{LvrhoxiD}
\left(\LL_v \rho\right)(y,\xi) = \, \D\ttt(y)-2(\tau+2n)\eta(y).
\end{equation}
By substituting $y=\xi$, we get:
\begin{equation}\label{Lvrhoxixi}
\left(\LL_v \rho\right)(\xi,\xi) = \, \D\ttt(\xi)-2(\tau+2n).
\end{equation}

From another point of view, due to \eqref{LvRQ} and \eqref{trLvR}, we have $\left(\LL_v \rho\right)(\xi,\xi)=0$. Therefore, \eqref{LvrhoxiD} and \eqref{Lvrhoxixi} deduce the equations below:
\begin{equation}\label{Lvrhoxi}
\left(\LL_v \rho\right)(x,\xi) = \D\tilde\tau(\f^2x), \qquad \D\tilde\tau(\xi) = 2(\tau + 2n).
\end{equation}
The latter equalities together with \eqref{Lvrhoxi} imply consecutively $\D\ttt(\xi)=0$ and
\[
\tau=2n, \qquad \ttt=const,
\]
which prove the assertion.
\end{proof}

\begin{theorem}\label{thm:ElSlRl}
Let $\M$ be a $(2n+1)$-dimensional para-Einstein-like para-Sasaki-like Riemannian $\Pi$-manifold.
If it admits a para-Ricci-like soliton with an arbitrary potential $v$, then the following identities are valid:
\begin{equation}\label{rho-tau-ttau}
\rho=-2n\, \eta\otimes\eta, \qquad \tau=\ttt=-2n.
\end{equation}
\end{theorem}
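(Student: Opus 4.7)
The plan is to combine \propref{prop:El-Dtau} and \propref{prop:Lvrho}, whose hypotheses are precisely the two assumptions made in the theorem. First I would invoke \propref{prop:El-Dtau}: since $\M$ is para-Einstein-like, the scalar curvature with respect to $\g$ satisfies $\ttt=-2n$ and is in particular constant, and the manifold is already $\eta$-Einstein, i.e.\ the coefficient $b$ in the defining relation vanishes, with
\[
\rho=\left(\tfrac{\tau}{2n}+1\right)g+\left(-2n-1-\tfrac{\tau}{2n}\right)\eta\otimes\eta.
\]
Thus $\rho$ is already pinned down as a one-parameter family in the single unknown $\tau$.

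Next I would use the soliton hypothesis via \propref{prop:Lvrho}, which gives $\tau=-2n$. Substituting this value into the two coefficients above immediately yields $a=\tfrac{-2n}{2n}+1=0$ and $c=-2n-1-\tfrac{-2n}{2n}=-2n$, so $\rho=-2n\,\eta\otimes\eta$, which is the first equality in \eqref{rho-tau-ttau}. Tracing this identity with $g^{ij}$ and using $g(\xi,\xi)=1$ from \eqref{strM2} recovers $\tau=-2n$ as a consistency check, while $\ttt=-2n$ was already obtained in the first step, so together they yield the second equality in \eqref{rho-tau-ttau}.

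There is no substantial obstacle in this route: both preparatory propositions are established earlier in the paper, and the argument amounts to recognising that their hypotheses combine cleanly and then performing the elementary substitution of $\tau=-2n$ into the explicit formula for $(a,b,c)$ furnished by \propref{prop:El-Dtau}. The only minor care needed is to note that the sign in $\tau=-2n$ from \propref{prop:Lvrho} is the one compatible with $\rho(\xi,\xi)=-2n$ in \eqref{curSl}, which is indeed the case, so the conclusion $\rho=-2n\,\eta\otimes\eta$ is consistent with the general identities for para-Sasaki-like structures.
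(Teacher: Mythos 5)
Your proposal is correct and matches the paper's own (one-line) proof, which simply cites \propref{prop:El-Dtau} and \propref{prop:Lvrho} (together with \thmref{thm:RlSl-v}); you have merely spelled out the substitution $\tau=-2n$ into the $\eta$-Einstein coefficients $(a,b,c)$, which is exactly what is intended. The extra citation of \thmref{thm:RlSl-v} in the paper is not needed for the computation as you present it, since the two propositions already supply $\ttt=-2n$, the $\eta$-Einstein form, and $\tau=-2n$.
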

\begin{proof}
The assertions in \eqref{rho-tau-ttau} follow from \thmref{thm:RlSl-v}, \propref{prop:El-Dtau}, and \propref{prop:Lvrho}.
\end{proof}


Let us recall that the sectional curvature $k(\alpha;p)$ of an arbitrary non-degenerate 2-plane $\alpha$, with a basis $\{x,y\}$ regarding $g$ in
$T_p\mathcal{M}$, $p \in \mathcal{M}$, is determined by the following equation:
\begin{equation}\label{sect}
k(\alpha;p)=\frac{R(x,y,y,x)}{g(x,x)g(y,y)-[g(x,y)]^2}.
\end{equation}
For $\dim \mathcal{M}=3$, we distinguish two special types of $2$-planes $\alpha$ with respect to the structure: a $\f$-holomorphic section ($\alpha= \f \alpha$) and a $\xi$-section ($\xi \in \alpha$).
Let us note that every $\f$-holomorphic section has a basis with the form $\{\f x,\f^2 x\}$.

\begin{theorem}\label{thm:dim3}
Let $\M$ be a $3$-dimensional para-Sasaki-like Riemannian $\Pi$-manifold.
If it admits a para-Ricci-like soliton with an arbitrary potential $v$, then:
\begin{itemize}
	\item[(i)] The sectional curvatures of its $\f$-holomorphic sections are equal to $1$;
   \item[(ii)] The sectional curvatures of its $\xi$-sections are equal to $-1$.
\end{itemize}
\end{theorem}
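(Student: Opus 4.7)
The plan is to upgrade the soliton information (which so far, via \propref{prop:Lvrho}, only yields $\tau=-2$) to a pointwise identity for the Ricci tensor, and then to read off both sectional curvatures from the standard three-dimensional curvature formula. The decisive feature of dimension three is that the Weyl conformal tensor vanishes, so the Riemann tensor is entirely determined by $\rho$, $g$ and $\tau$ through
\begin{equation*}
R(x,y)z=\rho(y,z)x-\rho(x,z)y+g(y,z)Qx-g(x,z)Qy-\tfrac{\tau}{2}\bigl(g(y,z)x-g(x,z)y\bigr).
\end{equation*}

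Setting $y=z=\xi$ in this identity and using the para-Sasaki-like relations $R(x,\xi)\xi=-\f^2x$ (the antisymmetric form of \eqref{curSl}) together with $Q\xi=-2\xi$ and $\rho(\xi,\xi)=-2$ from \eqref{curSl} (with $n=1$), one finds after a short rearrangement that
\begin{equation*}
Qx=\bigl(1+\tfrac{\tau}{2}\bigr)x-\bigl(3+\tfrac{\tau}{2}\bigr)\eta(x)\xi,
\end{equation*}
so the manifold is automatically $\eta$-Einstein in dimension three, with coefficients depending only on $\tau$. Substituting $\tau=-2$ from \propref{prop:Lvrho} gives $\rho=-2\,\eta\otimes\eta$ (and in particular $Qx=-2\eta(x)\xi$), which is the conclusion of \thmref{thm:ElSlRl} for $n=1$, obtained here without assuming para-Einstein-like.

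For a $\xi$-section, pick a basis $\{\xi,y\}$ and, after replacing $y$ by $y-\eta(y)\xi$, assume $\eta(y)=0$. The pair symmetry of $R$ together with $R(y,\xi)\xi=-\f^2y=-y$ gives $R(\xi,y,y,\xi)=-g(y,y)$, while the denominator in \eqref{sect} equals $g(y,y)$ as well; hence $k=-1$, proving (ii). For a $\f$-holomorphic section with basis $\{\f x,\f^2x\}$, the key observation is that $\eta$ annihilates both basis vectors, since $\eta(\f x)=0$ by \eqref{strM} and $\eta(\f^2 x)=\eta(x)-\eta(x)\eta(\xi)=0$. Consequently both $\rho$ and $Q$ vanish on the $2$-plane, and only the $\tau$-term of the three-dimensional curvature formula survives:
\begin{equation*}
R(\f x,\f^2x,\f^2x,\f x)=-\tfrac{\tau}{2}\bigl(g(\f x,\f x)g(\f^2x,\f^2x)-g(\f x,\f^2x)^2\bigr),
\end{equation*}
so that $k=-\tau/2=1$, establishing (i).

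The main obstacle is the first step: the hypotheses do not include being para-Einstein-like, so I cannot invoke \thmref{thm:ElSlRl} directly and must derive $\rho=-2\,\eta\otimes\eta$ from scratch. The way out is the vanishing of the Weyl tensor in dimension three, which forces $\eta$-Einstein behaviour on any para-Sasaki-like Riemannian $\Pi$-manifold of that dimension, independently of the soliton; the soliton then contributes only the constant value $\tau=-2$. Once that is in place the two sectional-curvature computations are routine, the $\xi$-section case not even requiring the three-dimensional curvature formula.
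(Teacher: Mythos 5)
Your proof is correct and follows essentially the same route as the paper: both use the three-dimensional curvature decomposition together with the para-Sasaki-like identities $R(x,\xi)\xi=-\f^2x$, $Q\xi=-2\xi$, $\rho(\xi,\xi)=-2$ to show the manifold is automatically $\eta$-Einstein with coefficients depending only on $\tau$, then insert $\tau=-2$ to obtain $\rho=-2\,\eta\otimes\eta$ and read off the sectional curvatures from \eqref{sect}. The only (harmless) deviations are that you take $\tau=-2$ directly from \propref{prop:Lvrho} instead of passing through \thmref{thm:ElSlRl}, and you compute the $\xi$-sectional curvature straight from $R(x,\xi)\xi=-\f^2x$ rather than from the specialized form of the curvature tensor.
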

\begin{proof}
A well-known fact is that the curvature tensor of a 3-dimensional manifold has the following form:
\begin{equation}\label{Rxyz-3dim}
\begin{array}{l}
R(x,y,z,w) = g(y,z)\rho(x,w) - g(x,z)\rho(y,w) \\[4pt]
\phantom{R(x,y,z,w) = }+ \rho(y,z)g(x,w) - \rho(x,z)g(y,w)\\[4pt]
\phantom{R(x,y,z,w) = }-\dfrac{\tau}{2}\{g(y,z)g(x,w) - g(x,z)g(y,w)\}.
\end{array}
\end{equation}
By plugging $y=z=\xi$ and taking into account \eqref{curSl}, we have the equation below:
\begin{equation*}\label{rho3dim}
\rho = \frac12\{(\tau+2)g-(\tau+6)\eta\otimes\eta\},
\end{equation*}
which means that $\M$ is an $\eta$-Einstein manifold.
Therefore, due to \thmref{thm:ElSlRl}, we have the following:
\[
\rho=-2\, \eta\otimes\eta,\qquad \tau=\ttt=-2.
\]

Substituting the latter two equalities in \eqref{Rxyz-3dim}, we get the following equations:
\begin{equation}\label{Rxyz-3dim-Rl}
\begin{array}{l}
R(x,y,z,w) = \,[g(y,z)-2\,\eta(y)\eta(z)]g(x,w) - 2\,g(y,z)\eta(x)\eta(w) \\[4pt]
\phantom{R(x,y,z,w) = \,}
-[g(x,z) -2\,\eta(x)\eta(z)]g(y,w) + 2\,g(x,z)\eta(y)\eta(w).
\end{array}
\end{equation}

Then, we calculate the sectional curvature of an arbitrary $\f$-holomorphic section with respect to a basis of $\{\f x,\f^2 x\}$, using \eqref{strM}, \eqref{strM2}, \eqref{sect} and \eqref{Rxyz-3dim-Rl}, and obtain:
\[
k(\f x,\f^2 x)=1.
\]

In a similar way, in the case of a $\xi$-section with respect to a basis $\{x,\xi\}$, we get:
\[
k(x,\xi)=-1.
\]
%
\end{proof}

\section{Example}\label{sec4}
Let $\mathcal{M}$ be a set of points in $\R^3$ with coordinates $(x^1,x^2,x^3)$. We equip $\mathcal{M}$ with a Riemannian $\Pi$-structure $(\f, \xi, \eta, g)$ as follows:
\begin{equation*}\label{strEx1-loc}
\begin{array}{c}
g\left(\DD_1,\DD_1\right)=g\left(\DD_2,\DD_2\right)=\cosh 2x^3,\qquad
g\left(\DD_1,\DD_2\right)=\sinh 2x^3,
\\[4pt]
g(\DD_1,\DD_3)=g(\DD_2,\DD_3)=0,\qquad g(\DD_3,\DD_3)=1,
\\[4pt]
\f  \DD_1=\DD_2,\qquad \f \DD_2=\DD_1,\qquad \xi=\DD_3,
\end{array}
\end{equation*}
where $\DD_1$, $\DD_2$, $\DD_3$ stand for
$\frac{\DD}{\DD{x^1}}$,
$\frac{\DD}{\DD{x^2}}$, $\frac{\DD}{\DD{x^3}}$, respectively.
The triplet of vectors $\{e_1,e_2,e_3\}$ determined by the equations:
\begin{equation}\label{eiloci}
e_1=\cosh x^3 \DD_1-\sinh x^3 \DD_2,\qquad
e_2=-\sinh x^3 \DD_1+\cosh x^3 \DD_2,\qquad
e_3=\DD_3
\end{equation}
forms an orthonormal $\f$-basis for $T_p\mathcal{M}$, $p\in \mathcal{M}$. Therefore, we have the following equations:
\begin{equation}\label{strEx1}
\begin{array}{c}
g(e_i,e_i)=1,\quad
g(e_i,e_j)=0,\quad
i,j\in\{1,2,3\},\; i\neq j,
\\[4pt]
\f  e_1=e_2,\qquad \f e_2=e_1,\qquad \xi=e_3.
\end{array}
\end{equation}

Using \eqref{eiloci}, we obtain the commutators of $e_i$, as follows:
\begin{equation}\label{com}
[e_0,e_1]=-e_2,\qquad
[e_0,e_2]=-e_1,\qquad [e_1,e_2]=0.
\end{equation}
In \cite{ManTav2}, it is proved that the solvable Lie group corresponding to the Lie algebra defined \mbox{by \eqref{com}} and equipped with the Riemannian $\Pi$-structure $(\f, \xi, \eta, g)$ from \eqref{strEx1}
is a para-Sasaki-like Riemannian $\Pi$-manifold. Moreover, the basic components of $\n$, $R_{ijkl}=R(e_i,e_j,e_k,e_l)$, and $\rho_{ij}=\rho(e_i,e_j)$ as well as the values of $\tau$, $\ttt$, and $k_{ij}=k(e_i,e_j)$ are obtained. Their nonzero values are determined by the following equations and the symmetry properties of $R$:
\begin{equation}\label{neij}
\n_{e_1} e_2=\n_{e_2} e_1= -e_3,\qquad
\n_{e_1} e_3= e_2,\qquad
\n_{e_2} e_3= e_1;
\end{equation}
\begin{equation*}\label{Rrho-Ex2}
R_{1221}=-R_{1331}=-R_{2332}=1;
\end{equation*}
\begin{equation}\label{rhotauk}
\rho_{33}=-2,\qquad \tau=\ttt=-2,\qquad k_{12}=-k_{13}=-k_{23}=1.
\end{equation}
From the first equality of \eqref{rhotauk}, we get that the Ricci tensor has the following form:
\begin{equation}\label{rho=2etaeta}
\rho=-2\eta\otimes\eta,
\end{equation}
i.e., $\M$ is a para-Einstein-like para-Sasaki-like Riemannian $\Pi$-manifold with constants $(a,b,c)=(0,0,-2)$. These results support Theorems \ref{thm:ElSlRl} and \ref{thm:dim3}.

Now, let us consider a vector field $v$ determined by the following equations:
\begin{equation}\label{v}
\begin{array}{l}
v=v^1e_1 + v^2e_2 + v^3e_3,
\\[4pt]
v^1=-\{c_1\cosh x^3 - c_2\sinh x^3\}x^1
+\{c_2\cosh x^3 -c_1\sinh x^3\}x^2 + \sinh x^3,
\\[4pt]
v^2=\{c_2\cosh x^3 -c_1\sinh x^3\}x^1
-\{c_1\cosh x^3 - c_2\sinh x^3\}x^2 + \cosh x^3,
\\[4pt]
v^3=c_3,
\end{array}
\end{equation}
where $c_1$, $c_2$, $c_3$ are arbitrary constants.

By virtue of \eqref{eiloci}, \eqref{strEx1}, \eqref{neij}, and \eqref{v}, we obtain the following:
\begin{equation}\label{neiv}
\begin{array}{l}
\n_{e_1} v= -c_1e_1  +(c_2+c_3)e_2 - v^2 e_3,
\\[4pt]
\n_{e_2} v= (c_2+c_3)e_1  -c_1e_2 - v^1 e_3,
\\[4pt]
\n_{e_3} v= v^2e_1  +v^1 e_2.
\end{array}
\end{equation}
Using \eqref{neiv}, we calculate the basic components $\left(\LL_v g\right)_{ij}=\left(\LL_v g\right)(e_i,e_j)$ of the Lie derivative $\LL_v g$, and the nonzero ones are the following:
\[
\begin{array}{l}
\left(\LL_v g\right)_{11}=\left(\LL_v g\right)_{22}=-2c_1,\qquad
\left(\LL_v g\right)_{12}=2(c_2+c_3).
\end{array}
\]
Therefore, the tensor $\LL_v g$ has the following form:
\begin{equation}\label{LvgEx2}
\LL_v g = -2\,c_1\,g   +2(c_2+c_3)\g   +2(c_1-c_2-c_3)\eta\otimes\eta.
\end{equation}
Substituting \eqref{rho=2etaeta} and \eqref{LvgEx2} in \eqref{defRl-v}, we deduce that $\M$ admits a para-Ricci-like soliton with a potential $v$ determined by \eqref{v}
and the constants:
\[
\lm=c_1,\qquad
\mu=-(c_2+c_3),\qquad
\nu=-(c_1-c_2-c_3-2).
\]

The latter results are in accordance with Theorems \ref{thm:RlSl-v}--\ref{thm:dim3}.

\vspace{6pt}

\end{document}